\documentclass{amsart}
\usepackage{graphicx} 
\usepackage{yufei}
\usepackage{comment}

\DeclareMathOperator{\dist}{dist}
\DeclareMathOperator{\bzero}{{\bf 0}}
\DeclareMathOperator{\Span}{span}
\DeclareMathOperator{\diam}{diam}

\title{Finer control on relative sizes of iterated sumsets}
\author{Jacob Fox}
\author{Noah Kravitz}
\author{Shengtong Zhang}
\address{Fox, Zhang: Stanford University, Stanford, CA 94305, USA}
\email{\{jacobfox,stzh1555\}@stanford.edu}

\address{Kravitz: Princeton University, Princeton, NJ 08540, USA}
\email{nkravitz@princeton.edu}

\begin{document}
\maketitle
\begin{abstract}
Inspired by recent questions of Nathanson, we show that for any infinite abelian group $G$ and any integers $m_1, \ldots, m_H$, there exist finite subsets $A,B \subseteq G$ such that $|hA|-|hB|=m_h$ for each $1 \leq h \leq H$.  We also raise, and begin to address, questions about the smallest possible cardinalities and diameters of such sets $A,B$.
\end{abstract}

\section{Introduction}

\subsection{The main problem}

For a subset $A$ of an abelian group $G$, denote the $h$-fold iterated sumset of $A$ by
$$hA:=\{a_1+\cdots+a_h: a_1, \ldots, a_h \in A\}.$$
For finite $A$, consider the sequence
$$|A|, |2A|, |3A|, \ldots$$
of the sizes of the iterated sumsets of $A$. 
Foundational results in additive combinatorics, such as the Pl\"unnecke--Ruzsa inequality, concern the possible growth rates of such a sequence.  Nathanson~\cite{nathanson} recently raised a new set of questions about the possible \emph{relative} growth rates of such sequences for different choices of $A$.

A classical theorem of Khovanskii~\cite{khov1,khov2} (see also~\cite{nathanson2,NR}) says that for each $A$, the quantity $|hA|$ is a polynomial function of $h$ once $h$ is sufficiently large.  It follows that for any finite subsets $A,B \subseteq G$, either $|hA|<|hB|$ for all sufficiently large $h$, or $|hA|=|hB|$ for all sufficiently large $h$, or $|hA|>|hB|$ for all sufficiently large $h$.  Nathanson, who was concerned primarily with the integer setting $G=\mathbb{Z}$, asked what kinds of oscillations in the signs of $|hA|-|hB|$ are possible if we restrict our attention to $1 \leq h \leq H$ for some natural number $H$.  The second author~\cite{noah} provided constructions showing that in any fixed infinite abelian group $G$, all sign patterns are possible.

\begin{theorem}[\cite{noah}]\label{thm:noah}
Let $G$ be an infinite abelian group, and let $H \in \mathbb{N}$ and $\epsilon_1, \ldots, \epsilon_H \in \{-,+\}$. Then there exist finite sets $A,B \subseteq G$ such that
$$|hA|-|hB| \quad \text{has the same sign as $\epsilon_h$}$$
for each $1 \leq h \leq H$.
\end{theorem}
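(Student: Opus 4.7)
The plan is an inductive scale-separation argument that builds the desired sign pattern one value of $h$ at a time. I will construct nested pairs $A_0 \subseteq \cdots \subseteq A_H = A$ and $B_0 \subseteq \cdots \subseteq B_H = B$ in $G$ such that at the end of stage $k$, one has $\mathrm{sign}(|hA_k|-|hB_k|) = \epsilon_h$ for all $1 \leq h \leq k$. Since $G$ is infinite abelian, every finite subset of $\mathbb{Z}$ admits a Freiman $H$-isomorphic copy in $G$ (via an element of sufficiently large or infinite order, or, in the bounded-exponent case, via an independent family of bounded-order elements), so it suffices to work in $G=\mathbb{Z}$.

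For the inductive step from $k$ to $k+1$, I will proceed in two substeps. First, I \emph{amplify} the current pair by replacing $A_k$ with $A_k' := \bigcup_{i=0}^{r-1}(A_k + iT)$, and similarly $B_k'$, for a large parameter $T$. Scale separation then gives $|hA_k'| = (h(r-1)+1)|hA_k|$, so
\[ |hA_k'| - |hB_k'| = (h(r-1)+1)\bigl(|hA_k|-|hB_k|\bigr), \]
preserving the sign pattern and inflating its magnitude by a factor we can choose. Second, I \emph{correct} by attaching a small pair $(C_A, C_B)$ at an even larger scale: set $A_{k+1} = A_k' \cup (C_A + N)$ and $B_{k+1} = B_k' \cup (C_B + N)$ for $N$ so large that
\[ |hA_{k+1}| = \sum_{j=0}^{h}\bigl|(h-j)A_k' + j C_A\bigr|, \]
and analogously for $B_{k+1}$.

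The correction $(C_A, C_B)$ is chosen to satisfy two conditions: (i) at each $h \leq k$, the added $j \geq 1$ terms are bounded in terms of $|C_A|, |C_B|, |A_k|, |B_k|$ only, and hence are dominated by the amplified signal once $r$ is large, so the previous signs persist; and (ii) at $h = k+1$, the new top-degree term $|(k+1)C_A|-|(k+1)C_B|$ (together with the other $j \geq 1$ contributions) tips the total difference to sign $\epsilon_{k+1}$. For example, one can take $C_A$ to be an arithmetic progression (minimizing $|jC_A|$) and $C_B$ a Sidon-like set (maximizing $|jC_B|$), or vice versa, and tune their sizes to flip the sign either way at $h=k+1$.

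The hard part will be condition (ii): controlling the joint sumset sizes $|(h-j)A_k' + jC_A|$ precisely enough to certify a sign change in the desired direction at $h = k+1$ without spoiling the established signs at $h \leq k$. The cleanest way forward is to arrange that the internal scales of $C_A$ and $C_B$ are also well separated from the scales inside $A_k', B_k'$, so that each joint sumset factors additively. Under this decoupling, condition (ii) becomes a transparent algebraic calculation in the parameters $|A_k|, |B_k|, |C_A|, |C_B|, r, T, N$, and one can verify that the ``signal'' degree of freedom afforded by choosing $(C_A, C_B)$ is enough to realize any $\epsilon_{k+1}$, closing the induction.
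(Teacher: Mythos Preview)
This paper does not prove Theorem~\ref{thm:noah} directly---it is quoted from~\cite{noah}---but deduces it from the stronger Corollary~\ref{cor:construction-general} (via Theorems~\ref{thm:construction-integers} and~\ref{thm:construction-positive-char}). That route is entirely different from yours: instead of inducting on $h$, the paper constructs $A,B$ in one shot as $\{0,1\}\cup(I\setminus A')$, where $I$ is a short interval placed far from the origin and $A'$ is a union of subintervals of prescribed lengths. The differences $|hA|-|hB|$ then reduce to an explicit upper-triangular linear system in the interval-counts $\gamma_r$, which is solved exactly. This yields sets of size $O(H\sum_h|m_h|)$; an iterated scale-separation construction would produce much larger sets, as the paper itself remarks about the original proof in~\cite{noah}.

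Your inductive plan, however, has a concrete gap in claim~(i). You assert that for $h\le k$ the $j\ge1$ terms in $|hA_{k+1}|=\sum_{j=0}^{h}|(h-j)A_k'+jC_A|$ are bounded in terms of $|C_A|,|C_B|,|A_k|,|B_k|$ only, and hence are dominated by the amplified $j=0$ signal once $r$ is large. This is false: for $1\le j\le h-1$ the set $(h-j)A_k'$ already has size $((h-j)(r-1)+1)\,|(h-j)A_k|$, linear in $r$, and under your own decoupling hypothesis $|(h-j)A_k'+jC_A|=|(h-j)A_k'|\cdot|jC_A|$. Thus the $j\ge1$ contribution to $|hA_{k+1}|-|hB_{k+1}|$ grows with $r$ at the \emph{same} rate as the $j=0$ term. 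Concretely, the coefficient of $r$ in $|hA_{k+1}|-|hB_{k+1}|$ is
\[
L_h=\sum_{j=0}^{h-1}(h-j)\bigl[\,|(h-j)A_k|\,|jC_A|-|(h-j)B_k|\,|jC_B|\,\bigr],
\]
and for $2\le h\le k$ the $j\ge1$ summands can overturn the sign supplied by $j=0$ (even $C_A=C_B=\{0\}$ gives $L_h=\sum_{i=1}^{h}i(|iA_k|-|iB_k|)$, a mixed-sign sum). So amplification does not isolate the previously established signs from the correction, and you are left needing $\mathrm{sign}(L_h)=\epsilon_h$ simultaneously for all $h\le k+1$---essentially $k+1$ coupled constraints on the sumset profiles of $C_A,C_B$, not the single constraint at $h=k+1$ that your sketch anticipates. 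An inductive construction along these lines can be made to work, but it requires a genuinely different coupling of the amplification and correction than the one you describe.
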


The paper~\cite{noah} also established a few strengthenings of this theorem: One can obtain the analogous statement with $d$ sets instead of just the two sets $A,B$, and in the integer setting one can also prescribe the equality $|hA|=|hB|$ for some values of $h$.  See \cite{anne} for related work.

The purpose of the present paper is to refine Theorem~\ref{thm:noah} and to consider some related questions.

\subsection{More precise constructions}

Our main result shows that we can prescribe not only the signs of the quantities $|hA|-|hB|$ but also their exact values.  For integers $a \leq b$, we write $[a,b]:=\{a,a+1,a+2, \ldots, b\}$.

\begin{theorem}\label{thm:construction-integers}
Let $H \in \mathbb{N}$ and $m_1, \ldots, m_H \in \mathbb{Z}$.  Then there exist sets $A,B \subseteq \left[0,60H^2 \sum_h |m_h|\right]$, each of size at most $2+60H \sum_h |m_h|$, such that
$$|hA|-|hB|=m_h$$
for each $1 \leq h \leq H$.
\end{theorem}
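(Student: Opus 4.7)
The plan is to build $A, B$ as a common anchor $\{0, L\}$ (with $L$ very large) together with small perturbations inside $(0, L)$. Writing $A = \{0, L\} \cup P_A$ and $B = \{0, L\} \cup P_B$ where $P_A, P_B \subseteq (0, L)$ have diameter much smaller than $L/H$, one can group summands in any $h$-fold sum according to how many of them equal $L$ to obtain the identity
\[
|hA| \ = \ \sum_{k=0}^{h} |T_k(P_A)|, \qquad T_k(P) := \bigcup_{j=0}^{k} jP,
\]
using the convention $|0 \cdot P| := 1$. This reduces the theorem to constructing $P_A, P_B$ with $|T_k(P_A)| - |T_k(P_B)| = m_k - m_{k-1}$ for each $k \in [1, H]$ (where $m_0 := 0$).

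For each $h \in [1, H]$, I introduce an ``$h$-atom'': a pair $(X_h, Y_h)$ of two-element sets differing by a single element---for $h \geq 2$, take $X_h = \{1, h+1\}$ and $Y_h = \{1, h\}$. Direct computation shows $|kX_h| = |kY_h| = k + 1$ for all $k$ (since both sides are two-element APs of length $k+1$), while in isolation the $T_k$-sizes differ by
\[
|T_k(X_h)| - |T_k(Y_h)| \ = \ \max(0, k - h + 1),
\]
a lower-triangular pattern. I will form $P_A$ and $P_B$ as unions of translated atoms $\bigcup_\alpha (t_\alpha + X_{h_\alpha})$ and $\bigcup_\alpha (t_\alpha + Y_{h_\alpha})$ at well-separated offsets, with $c_h \in \mathbb{Z}$ copies of the $h$-atom (negative meaning the roles of $X_h$ and $Y_h$ are swapped). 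Solving the triangular system $\sum_h c_h \max(0, k-h+1) = m_k - m_{k-1}$ via the second-difference formula $c_h = n_h - 2 n_{h-1} + n_{h-2}$ (with $n_k := m_k - m_{k-1}$) gives $|c_h| \leq 4 \max_k |n_k| \leq 8 \max_k |m_k|$, and so $\sum_h |c_h| = O(\sum_h |m_h|)$ atoms suffice.

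The main technical obstacle is an \emph{additivity lemma}: for sufficiently separated offsets, the $T_k$-differences should decompose as $|T_k(P_A)| - |T_k(P_B)| = \sum_\alpha (|T_k(X_{h_\alpha})| - |T_k(Y_{h_\alpha})|)$. The matched-profile condition $|kX_h| = |kY_h|$ gives some cancellation of cross-term contributions---that is, sums in $jP_A$ or $jP_B$ using elements from more than one atom---but additional care in choosing the offsets (for instance, placing atoms in geometric progression or in a base-$(H{+}1)$ arrangement) is needed to force all cross-term multisets to contribute identically to $P_A$ and $P_B$. Once this additivity is established, counting atoms (each contributing $O(1)$ elements to $P_A, P_B$) and tracking per-atom widths $O(H)$, inter-atom spacings $O(H^2)$, and the frame size $L = \Theta(H)\cdot\mathrm{diam}(P_A)$ yields the stated bounds $|A|,|B| \le 2 + 60H\sum_h|m_h|$ and $\mathrm{diam}(A),\mathrm{diam}(B) \le 60H^2\sum_h|m_h|$ after the constants are optimized.
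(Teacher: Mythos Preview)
Your framework---anchoring with $\{0,L\}$ and reducing $|hA|-|hB|$ to differences $|T_k(P_A)|-|T_k(P_B)|$---is reasonable, and your single-atom computation $|T_k(X_h)|-|T_k(Y_h)|=\max(0,k-h+1)$ is correct. The gap is the additivity lemma: it is not a technicality awaiting the right offsets but is actually false for your atoms. Take one $2$-atom and one $3$-atom, both with positive sign, at offsets $t_1=0$ and $t_2=100$, so $P_A=\{1,3,101,104\}$ and $P_B=\{1,2,101,103\}$. The cross-term in $3P_A$ sitting at offset $2t_1+t_2$ is $2X_2+X_3=\{2,4,6\}+\{1,4\}$, of size $6$; the matching piece of $3P_B$ is $2Y_2+Y_3=\{2,3,4\}+\{1,3\}$, of size $5$. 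No placement of offsets can repair this, since translating the atoms only translates these pieces without changing their cardinalities. A full computation gives $|T_3(P_A)|-|T_3(P_B)|=29-25=4$, whereas your additive prediction is $(|T_3(X_2)|-|T_3(Y_2)|)+(|T_3(X_3)|-|T_3(Y_3)|)=2+1=3$. The matched-profile condition $|kX_h|=|kY_h|$ controls only pure single-atom sums, not mixed Minkowski sums such as $j_1 X_{h_1}+j_2 X_{h_2}$ with $j_1,j_2\ge 1$.

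The paper avoids this issue by working subtractively rather than additively. It takes $A=\{0,1\}\cup(I\setminus A')$ with $I$ a short interval of length $N/(2H)$ at the top of $[0,N]$ and $A'$ a union of well-spaced sub-intervals (``holes'') inside $I$. The crucial identity is $j(I\setminus A')=jI$ for every $j\ge 2$: once two or more summands come from the interval piece, the holes fill in completely. Hence $|hA|-|hB|$ collapses to the single $2$-fold difference $|[0,h-1]+(I\setminus A')|-|[0,h-1]+(I\setminus B')|$, and a hole of length $r$ contributes exactly $-(r-h+1)$ when $r\ge h$ and $0$ otherwise---precisely your triangular pattern, but now with genuine additivity, because well-spaced holes in an interval do not interact under a single Minkowski sum with $[0,h-1]$. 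Your building-block idea could perhaps be salvaged by replacing the two-point atoms with atoms whose iterated sumsets are intervals (so that mixed sums have sizes determined by lengths alone), but that modification essentially leads back to the paper's construction.
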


\begin{theorem}\label{thm:construction-positive-char}
Let $p$ be a prime, and let $H \in \mathbb{N}$ and $m_1, \ldots, m_H \in \mathbb{Z}$.  For all sufficiently large $N$, there exist sets $A,B \subseteq (\mathbb{Z}/p\mathbb{Z})^N$ such that
$$|hA|-|hB|=m_h$$
for each $1 \leq h \leq H$.
\end{theorem}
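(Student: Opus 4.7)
The plan is to first apply Theorem~\ref{thm:construction-integers} to obtain integer sets $A_0, B_0 \subseteq \mathbb{Z}$ with $|hA_0| - |hB_0| = m_h$ for $1 \le h \le H$, and then transfer these to $(\mathbb{Z}/p\mathbb{Z})^N$ via a Freiman $H$-isomorphism; since Freiman $H$-isomorphisms preserve $|hA|$ for all $h \le H$, the desired conclusion follows.

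Set $T := A_0 \cup B_0 = \{a_1, \dots, a_K\}$, and let $L = \{d \in \mathbb{Z}^K : \sum_i d_i = 0,\ \sum_i |d_i| \le 2H,\ \sum_i d_i a_i = 0\}$ be the set of short $\mathbb{Z}$-linear relations on $T$; this encodes the Freiman $H$-structure of $T$. A valid transfer corresponds to a group homomorphism $\pi : \mathbb{Z}^K \to (\mathbb{Z}/p\mathbb{Z})^N$ such that the kernel of $\pi$, intersected with the short-vector set $\{d : \sum_i d_i = 0,\ \sum_i|d_i|\le 2H\}$, equals $L$. I would take $\pi$ to factor through $\mathbb{Z}^K / (L + p\mathbb{Z}^K)$ via a generic $\mathbb{F}_p$-linear injection into $(\mathbb{Z}/p\mathbb{Z})^N$, which exists once $N$ is large enough. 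The construction therefore succeeds provided the short vectors of $L + p\mathbb{Z}^K$ coincide with those of $L$.

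For $H < p$ this condition is immediate: any nonzero $d \in p\mathbb{Z}^K$ with $\sum_i d_i = 0$ satisfies $\|d\|_1 \ge 2p > 2H$, so $p\mathbb{Z}^K$ contributes no additional short vectors, and the plan concludes cleanly. The principal obstacle is the range $H \ge p$: here the vector $(p,-p,0,\dots,0)$ is short and lies in $p\mathbb{Z}^K \setminus L$ whenever $a_1 \ne a_2$, and more generally no Freiman $H$-isomorphism from $T$ into $(\mathbb{Z}/p\mathbb{Z})^N$ exists once $|T| \ge 2$, because $p$ copies of two distinct elements have equal $H$-fold sums in $(\mathbb{Z}/p\mathbb{Z})^N$ but not in $\mathbb{Z}$.

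To handle $H \ge p$, I would bypass the Freiman reduction and construct $A,B$ directly inside $(\mathbb{Z}/p\mathbb{Z})^N$, adapting the combinatorial mechanism underlying Theorem~\ref{thm:construction-integers}. A promising avenue is to write $A = \bigsqcup_j A^{(j)}$ and $B = \bigsqcup_j B^{(j)}$ with components in generic complementary subspaces, so that the generating function $f_A(x) := \sum_{h \ge 0} |hA| x^h$ factors as $\prod_j f_{A^{(j)}}(x)$, and then realize the prescribed difference $f_A(x) - f_B(x) \equiv \sum_h m_h x^h \pmod{x^{H+1}}$ by assembling atomic building blocks with simple sumset behavior. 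Executing this in the characteristic-$p$ setting, where $p$-torsion creates unavoidable collisions between iterated sumsets, is where I expect the bulk of the work to lie.
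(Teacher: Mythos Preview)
Your Freiman-isomorphism reduction is correct and clean for the range $H<p$, and it is genuinely different from the paper's approach: the paper gives a single unified construction for all $H$ and $p$, whereas you are importing the integer construction wholesale when the characteristic is large enough. That part is fine.

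The gap is the case $H\ge p$, where your proposal does not contain a proof. First, the generating-function factorization you propose is false as stated. If $A=A^{(1)}\sqcup A^{(2)}$ with $A^{(j)}\subseteq V_j$ and $V_1\oplus V_2$ a direct sum, then $hA=\bigcup_{h_1+h_2=h}(h_1A^{(1)}+h_2A^{(2)})$; the summands have size $|h_1A^{(1)}|\cdot|h_2A^{(2)}|$, but the union is \emph{not} disjoint in general, because the sets $h_1A^{(1)}$ for different $h_1$ typically overlap. Concretely, with $A^{(1)}=\{0,v_1\}$ and $A^{(2)}=\{v_2,2v_2\}$ one gets $|2A|=9$ while the coefficient of $x^2$ in $f_{A^{(1)}}f_{A^{(2)}}=(1-x)^{-4}$ is $10$. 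So the mechanism you hope to use does not exist, and beyond this you explicitly defer the construction; for $H\ge p$ your proposal is an acknowledgment of the difficulty rather than a resolution of it.

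The paper handles all $H,p$ at once by working in $(\mathbb{Z}/p\mathbb{Z})^N=V\times W$ with $V=(\mathbb{Z}/p\mathbb{Z})^H$ and $W=(\mathbb{Z}/p\mathbb{Z})^M$, and taking
\[
A=\Bigl(\bigcup_{i=1}^{H-1}\{e_i\}\times\mathcal{B}_i(\mathbf{0})\Bigr)\cup\bigl(\{e_H\}\times(W\setminus A')\bigr),
\]
with $\mathcal{B}_i$ an $\ell^1$-ball in $W$ and $A'\subseteq W$ a small union of well-separated balls of prescribed radii. The $V$-coordinate records which pieces were summed; the crucial point for $p$-torsion is that whenever an index is repeated $p$ times, one can replace those $p$ copies by copies of $H$ without changing the $V$-coordinate, and since $2(W\setminus A')=W$ such contributions are absorbed into terms whose $W$-fiber is all of $W$. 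After this reduction only the ``exactly one index equal to $H$'' terms distinguish $A$ from $B$, and these yield an upper-triangular integer linear system in parameters $\gamma_r$ attached to the ball-radii in $A',B'$, exactly as in the integer proof. This absorption trick is the missing idea in your proposal.
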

The proof of Theorem \ref{thm:construction-positive-char} shows that we may take $N=H+\left\lceil 10\log_p\left(1+H^{H^3}\sum_h |m_h| \right) \right\rceil$.

Combining these two theorems, as in~\cite[Section 2.1]{noah}, yields the following corollary for infinite abelian groups; we also obtain the same conclusion for finite abelian groups that are sufficiently large relative to $m_1, \ldots, m_H$.

\begin{corollary}\label{cor:construction-general}
Let $G$ be an infinite abelian group, and let $H \in \mathbb{N}$ and $m_1, \ldots, m_H \in \mathbb{Z}$. Then there exist finite sets $A,B \subseteq G$ such that
$$|hA|-|hB|=m_h$$
for each $1 \leq h \leq H$.
\end{corollary}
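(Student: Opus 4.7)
The plan is to reduce Corollary~\ref{cor:construction-general} to Theorems~\ref{thm:construction-integers} and~\ref{thm:construction-positive-char} via a structural dichotomy on the infinite abelian group $G$, following the template of~\cite[Section~2.1]{noah}. The key observation throughout is that an injective homomorphism between abelian groups preserves the cardinalities of iterated sumsets, so it suffices to embed a suitable witness from $\mathbb{Z}$ or from $(\mathbb{Z}/p\mathbb{Z})^N$ into $G$.

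First, if $G$ has an element of infinite order, then $G$ contains a subgroup isomorphic to $\mathbb{Z}$, and I can pull back the sets furnished by Theorem~\ref{thm:construction-integers} directly. Otherwise $G$ is a torsion group, and its primary decomposition $G=\bigoplus_p G_p$ forces some $p$-primary component $G_p$ to be infinite. I would then split on whether $G_p$ has bounded exponent.

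If $G_p$ has unbounded exponent, then it contains cyclic subgroups $\mathbb{Z}/p^k\mathbb{Z}$ for arbitrarily large $k$. Choosing $k$ with $p^k > 60H^3\sum_h|m_h|$ and applying Theorem~\ref{thm:construction-integers}, the resulting sets $A,B$ lie in $[0,60H^2\sum_h|m_h|]$, and their iterated sumsets $hA,hB$ for $h\le H$ fit inside $[0,p^k-1]$; thus no wraparound occurs when these sets are embedded into $\mathbb{Z}/p^k\mathbb{Z}\hookrightarrow G$, and cardinalities transfer faithfully. If instead $G_p$ has bounded exponent $p^e$, I claim the socle $G_p[p]:=\{g\in G_p:pg=0\}$ is infinite; granting this, $G_p[p]$ is an infinite $\mathbb{F}_p$-vector space, so it contains $(\mathbb{Z}/p\mathbb{Z})^N$ for every $N$, and Theorem~\ref{thm:construction-positive-char} finishes the job. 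The claim itself is short: for each $i\ge 1$, multiplication by $p^{i-1}$ is a homomorphism $G_p[p^i]\to G_p[p]$ with kernel $G_p[p^{i-1}]$, hence $|G_p[p^i]/G_p[p^{i-1}]|\le |G_p[p]|$; telescoping gives $|G_p|=|G_p[p^e]|\le |G_p[p]|^e$, which would be finite if $G_p[p]$ were, contradicting $|G_p|=\infty$.

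The main subtlety is the unbounded-exponent torsion case, where one must verify that the integer construction embeds faithfully into a finite cyclic group; this is exactly what the explicit diameter bound in Theorem~\ref{thm:construction-integers} buys. The same case analysis also yields the finite-abelian-group strengthening mentioned after the corollary, provided $G$ is large enough that one of the containments $\mathbb{Z}/n\mathbb{Z}\hookrightarrow G$ or $(\mathbb{Z}/p\mathbb{Z})^N\hookrightarrow G$ holds for sufficient $n$ or $N$ depending on $H$ and $\sum_h|m_h|$.
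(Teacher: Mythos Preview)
Your overall strategy---reducing to Theorems~\ref{thm:construction-integers} and~\ref{thm:construction-positive-char} via a structural dichotomy on $G$, exactly as the paper indicates by citing~\cite[Section~2.1]{noah}---is correct, and most of your execution is sound. However, there is a genuine gap in the torsion case: the assertion that the primary decomposition $G=\bigoplus_p G_p$ ``forces some $p$-primary component $G_p$ to be infinite'' is false. A counterexample is $G=\bigoplus_{p\text{ prime}}\mathbb{Z}/p\mathbb{Z}$, an infinite torsion abelian group in which every $G_p\cong\mathbb{Z}/p\mathbb{Z}$ is finite. Your subsequent case split (unbounded vs.\ bounded exponent of $G_p$) therefore does not cover all infinite torsion groups.

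The repair is a slight refinement of your dichotomy: split instead on whether $G$ itself has bounded exponent. If $G$ has bounded exponent $n$, then $G_p=0$ for every prime $p\nmid n$, so only finitely many primary components are nontrivial and hence at least one must be infinite; your socle argument then applies verbatim to that $G_p$. If $G$ has unbounded exponent---this subsumes the infinite-order case, your ``$G_p$ of unbounded exponent'' case, and the counterexample above---then $G$ contains cyclic subgroups of arbitrarily large order (in the counterexample, $\bigoplus_{i\leq k}\mathbb{Z}/p_i\mathbb{Z}\cong\mathbb{Z}/(p_1\cdots p_k)\mathbb{Z}$ by the Chinese Remainder Theorem), and your embedding of the Theorem~\ref{thm:construction-integers} construction into a sufficiently large cyclic group goes through unchanged.
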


With just a bit more notation, the previous three results generalize easily to the case of more than two sets.  For example, the many-set version of Corollary~\ref{cor:construction-general} is as follows; again, a sufficiently large abelian group works just as well as an infinite abelian group.

\begin{corollary}\label{cor:many-sets}
Let $G$ be an infinite abelian group, and let $d,H \in \mathbb{N}$ and $\vec m_1, \ldots, \vec m_H \in \mathbb{Z}^d$.  Then there exist finite sets $A_1, \ldots, A_d \subseteq G$ such that
$$(|hA_1|, \ldots, |hA_d|)-\vec m_h$$
is a constant vector for each $1 \leq h \leq H$.
\end{corollary}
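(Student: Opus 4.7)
The plan is to mirror the argument that deduces Corollary \ref{cor:construction-general} from Theorems \ref{thm:construction-integers} and \ref{thm:construction-positive-char}, but applied to $d$-set analogs of those two theorems. In more detail, I would first establish the following:

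\textbf{(A)} A $d$-set version of Theorem \ref{thm:construction-integers}: for any $\vec m_1,\ldots,\vec m_H\in\mathbb{Z}^d$, there exist finite sets $A_1,\ldots,A_d\subseteq\mathbb{Z}$ such that $|hA_i|-|hA_j|=m_{h,i}-m_{h,j}$ for all $1\leq h\leq H$ and all $1\leq i,j\leq d$, where $\vec m_h=(m_{h,1},\ldots,m_{h,d})$.

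\textbf{(B)} A $d$-set version of Theorem \ref{thm:construction-positive-char}: the same conclusion with the $A_i$ taken inside $(\mathbb{Z}/p\mathbb{Z})^N$ for $N$ sufficiently large.

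Once (A) and (B) are in hand, Corollary \ref{cor:many-sets} follows from the same argument used to combine Theorems \ref{thm:construction-integers} and \ref{thm:construction-positive-char} into Corollary \ref{cor:construction-general} (see \cite[Section 2.1]{noah}): a standard structural fact gives that any infinite abelian group $G$ either contains a copy of $\mathbb{Z}$, or contains $(\mathbb{Z}/p\mathbb{Z})^N$ for every $N$ and some fixed prime $p$, and we simply transport the relevant $d$-set construction into $G$.

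For (A), the two-set construction underlying Theorem \ref{thm:construction-integers} presumably realizes $A$ and $B$ as two controlled perturbations of a common ``template'' set, with the perturbation for $B$ engineered to shift $|hB|-|hA|$ by exactly $m_h$ for each $h\leq H$. The natural $d$-set generalization uses $d$ perturbations, one per $A_i$, each tuned to shift $|hA_i|$ by the prescribed offset $m_{h,i}$ relative to a common baseline $c_h$. To avoid interference, the $i$-th perturbation is placed at its own well-separated scale in $\mathbb{Z}$, so that in forming each $hA_i$ the cross-terms between different perturbations contribute only the same common quantity $c_h$ to every $|hA_i|$. Argument (B) is identical in spirit, with ``well-separated scales'' in $\mathbb{Z}$ replaced by disjoint blocks of coordinate directions in $(\mathbb{Z}/p\mathbb{Z})^N$, which is why $N$ must be enlarged by roughly a factor of $d$.

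The main obstacle is exactly this independence check: one must verify that the $d$ perturbations act independently on the iterated sumset sizes, so that $|hA_i|$ decomposes cleanly into a common baseline $c_h$ plus the contribution of the $i$-th perturbation alone. This is essentially the same interference analysis already carried out in the proofs of Theorems \ref{thm:construction-integers} and \ref{thm:construction-positive-char}, and accounts for the paper's remark that only ``a bit more notation'' is needed to push the two-set proofs through.
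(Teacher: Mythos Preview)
Your overall strategy---establish $d$-set versions of Theorems~\ref{thm:construction-integers} and~\ref{thm:construction-positive-char} and then combine them via the structural dichotomy for infinite abelian groups---is exactly what the paper intends when it says the many-set arguments are ``identical'' to the two-set ones.

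The one overcomplication is the talk of placing the $i$-th perturbation at its own well-separated scale to avoid ``cross-terms between different perturbations.'' There are no such cross-terms to avoid: each $A_i$ is a \emph{separate} set containing only its \emph{own} perturbation, so $hA_i$ never sees $A'_j$ for $j\ne i$. Concretely, in the integer case one simply takes $A_i=\{0,1\}\cup(I\setminus A'_i)$ with all the $A'_i$ sitting in the middle third of the \emph{same} interval $I$; the computation leading to \eqref{eq:linear-expression} already gives $|hA_i|=c_h-\sum_{r\ge h}\alpha_{i,r}(r-h+1)$ for a baseline $c_h$ not depending on $i$, and one solves the same upper-triangular system once per index $i$ (then adds a common constant $C_r$ to all the $\alpha_{i,r}$ to make them nonnegative, which shifts every $|hA_i|$ by the same amount). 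The positive-characteristic case is identical: all the $A'_i$ are collections of balls in a single copy of $W$, and no factor-of-$d$ enlargement of $N$ is needed. So the ``independence check'' you flag as the main obstacle is in fact vacuous, which is why the paper calls the extension a matter of notation only.
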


Since the arguments in the two-set and many-set versions are identical, we present proofs only for the former.

\subsection{Questions about efficiency}
Let us return to the initial motivating problem, namely, finding finite sets $A,B \subseteq \mathbb{Z}$ such that the quantities $|hA|-|hB|$ have prescribed signs for all $1 \leq h \leq H$.  We know that for any choice of signs, it is possible to find such sets $A,B$.  It is natural to ask how ``efficient'' such examples can be, in the sense of how small we can make $A,B$.  Two reasonable ways of measuring smallness are by the cardinalities of $A,B$ and by their diameters.

With this in mind, let $\kappa(H)$ denote the smallest natural number $k$ such that for each choice of signs $\epsilon_1, \ldots, \epsilon_H \in \{-,+\}$, there are sets $A,B \subseteq \mathbb{Z}$ each of size at most $k$ such that $|hA|-|hB|$ has the same sign as $\epsilon_h$ for each $1 \leq h \leq H$.  Likewise, let $\nu(H)$ denote the smallest natural number $N$ such that for each choice of signs $\epsilon_1, \ldots, \epsilon_H \in \{-,+\}$, there are sets $A,B \subseteq [0,N]$ such that $|hA|-|hB|$ has the same sign as $\epsilon_h$ for each $1 \leq h \leq H$.  Determining the asymptotics of the functions $\kappa(H), \nu(H)$ seems interesting and difficult.

The sets constructed in the proof of Theorem~\ref{thm:noah} are quite large and deliver poor upper bounds on $\kappa(H), \nu(H)$.  The construction behind Theorem~\ref{thm:construction-integers} delivers more reasonable upper bounds.  We are also able to establish some lower bounds using ideas from convex geometry and an effective Khovanskii theorem of Granville and Walker~\cite{GW}.  The following two theorems summarize the current state of affairs.


\begin{theorem}\label{thm:kappa-bounds}
For $H \geq 3$, we have $\sqrt{H/\log H} \ll \kappa(H) \ll H$.
\end{theorem}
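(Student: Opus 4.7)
The theorem has an upper bound $\kappa(H) \ll H$ and a lower bound $\kappa(H) \gg \sqrt{H/\log H}$, and I would prove them separately.

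\emph{Upper bound.} The goal is to construct, for each sign pattern $(\epsilon_1, \ldots, \epsilon_H) \in \{-,+\}^H$, sets $A, B \subseteq \mathbb{Z}$ of cardinality $O(H)$ with $\mathrm{sign}(|hA|-|hB|) = \epsilon_h$ for $1 \leq h \leq H$. A direct appeal to Theorem~\ref{thm:construction-integers} with $m_h = \epsilon_h$ gives only cardinality $2 + 60H \cdot H = O(H^2)$, which is too weak. To improve this, I would use a tailored construction exploiting the fact that only signs (not exact values) are prescribed: stack $O(H)$ ``perturbation gadgets'' of constant size on top of a common backbone (e.g., an arithmetic progression of length $\Theta(H)$), placing each gadget at its own geometric scale so that $hA$ and $hB$ decompose into near-disjoint blocks. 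The sign of $|hA|-|hB|$ can then be matched to $\epsilon_h$ inductively in $h$, spending an amortized $O(1)$ elements per prescribed index.

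\emph{Lower bound.} The plan is to show that the number of sign patterns achievable by pairs with $|A|, |B| \leq k$ is at most $H^{O(k^2)}$, which (together with $H^{O(k^2)} \geq 2^H$) forces $k^2 \log H \gtrsim H$, hence $k \gtrsim \sqrt{H/\log H}$. The first step is a diameter reduction: using the effective Khovanskii theorem of Granville--Walker~\cite{GW} together with elementary ``scale-separation'' estimates, one shows that any element of $A$ much larger than the rest can be replaced by a smaller value without changing $(|hA|)_{h=1}^H$, because such an outlier contributes to $hA$ only via its multiplicities and the resulting blocks remain disjoint. Iterating this reduction yields $A, B \subseteq [0, N]$ with $N = H^{O(k)}$ without loss of generality. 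Then a direct count: there are at most $\binom{N+1}{k}^2 \leq N^{2k} \leq H^{O(k^2)}$ such pairs, and hence at most $H^{O(k^2)}$ achievable sign patterns, yielding the desired bound.

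\emph{Main obstacle.} The crux is the diameter-reduction step for the lower bound: one must carefully quantify how far apart elements of $A$ must be for scale-separation to hold over the full range $h \leq H$, and combine this with the Granville--Walker effective onset of the polynomial regime to produce a uniform bound $N = H^{O(k)}$. The convex-geometric ingredient enters here in counting and comparing reduced configurations. Once this is in hand, the counting is straightforward, but weaker control of $N$ (e.g., $N = H^{\omega(k)}$) would degrade the final exponent below $\sqrt{H/\log H}$. The upper bound presents a secondary challenge: ensuring the gadget construction handles all $2^H$ sign patterns uniformly within an $O(H)$ budget, which likely requires a greedy or recursive placement of scales rather than naive superposition.
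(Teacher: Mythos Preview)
Your lower-bound plan---compress $A$ to diameter $H^{O(k)}$ and then count subsets---is essentially the alternative argument the paper attributes to Alon: every $k$-element set of integers is $2H$-Freiman-isomorphic to a subset of $[0,O(H)^k]$, whence at most $O(H)^{k^2}$ possible sequences $(|hA|)_{h\le H}$. Your ``scale-separation'' step is exactly this compression, and it works; but the Granville--Walker theorem is a red herring here. That result says $|hA|$ is affine for $h\ge N-2$ when $A\subseteq[0,N]$, which is irrelevant to reducing $N$; in the paper it is invoked only for the $\nu(H)$ lower bound. The paper's \emph{primary} proof of the $\kappa(H)$ lower bound is different and cleaner: the sequence $(|hA|)_{h\le H}$ is determined by the lattice $\Lambda(A)=\Span_{\mathbb Z}\{x\in[-H,H]^k:\sum_i x_i a_i=0\}$, and any such lattice has a basis inside $[-H,H]^k$, giving at most $O(H)^{k^2}$ possibilities with no diameter reduction at all.

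Your upper-bound sketch, however, is too vague and misses the actual mechanism. The paper does not build gadgets at geometric scales; it sharpens the construction of Theorem~\ref{thm:construction-integers}. With each $|m_h|=1$ the solved coefficients $\gamma_r=m_r-2m_{r+1}+m_{r+2}$ are all $O(1)$, so only $O(H)$ gap-intervals are needed in $A',B'$. The obstacle is that the carrier interval $I$ in Theorem~\ref{thm:construction-integers} has length $\Theta(H^2)$ and would contribute $\Theta(H^2)$ elements on its own. The paper's fix is twofold: pack the gap-intervals contiguously into blocks $\widetilde A,\widetilde B\subseteq[0,D]$ with $D=O(H^2)$, and then replace the remaining bulk of $I$ by a \emph{thin} set $X_D\subseteq[0,D]$ of size $O(\sqrt D)=O(H)$ satisfying $X_D+X_D=[0,2D]$, so that whenever two or more summands land in this region the full interval is already filled. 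This is the idea your proposal lacks, and without it the ``inductive matching at separate scales'' you describe is not shown to achieve all $2^H$ patterns within an $O(H)$ budget.
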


\begin{theorem}\label{thm:nu-bounds}
For $H \geq 3$, we have $H+1 \leq \nu(H) \ll H^3$.
\end{theorem}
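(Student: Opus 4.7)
The upper bound $\nu(H) \ll H^3$ is immediate from Theorem~\ref{thm:construction-integers}: given any sign pattern $\epsilon_1, \ldots, \epsilon_H \in \{-,+\}$, apply the theorem with $m_h := \epsilon_h \in \{-1, +1\}$. Then $\sum_h |m_h| = H$, and the theorem produces sets $A, B \subseteq [0, 60H^3]$ satisfying $|hA| - |hB| = \epsilon_h$ for each $h$, giving $\nu(H) \leq 60H^3$.

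For the lower bound $\nu(H) \geq H+1$, I would exhibit a sign pattern that no pair $A, B \subseteq [0, H]$ can realize. The natural candidate is the alternating pattern $\epsilon_h = (-1)^{h+1}$, which demands that the sequence $(|hA| - |hB|)_{h=1}^H$ undergo $H - 1$ sign changes. After translating so that $\min A = \min B = 0$, we have $\diam(A), \diam(B) \leq H$.

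The key tool is the effective Khovanskii theorem of Granville--Walker~\cite{GW}: for any finite $A \subseteq \mathbb{Z}$, the function $h \mapsto |hA|$ agrees with a linear polynomial $s_A h + c_A$, where the slope $s_A := \diam(A)/\gcd(A - A)$, once $h$ exceeds an explicit threshold $h^*_A$ controlled by $\diam(A)$ and $|A|$. Applied to $A, B \subseteq [0, H]$, this threshold is bounded in terms of $H$. Thus $|hA| - |hB|$ is affine on the range $[h^*, H]$ for $h^* := \max(h_A^*, h_B^*)$, so it has at most one sign change there; combined with the trivial bound of $h^* - 1$ sign changes on the earlier range $[1, h^* - 1]$, the total number of sign changes of $(|hA|-|hB|)_{h=1}^H$ is at most $h^*$.

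To push this count below $H - 1$ and so rule out the alternating pattern, I would combine Granville--Walker's effective threshold (giving roughly $h_A^* \leq \diam(A)$, hence $h^* \leq H$) with a convex-geometric analysis of the possible slopes $s_A, s_B \in \{1, \ldots, H\}$: in the extremal case where $h^*$ is nearly maximal, the slopes $s_A, s_B$ are constrained enough that the affine function $(s_A - s_B)h + (c_A - c_B)$ cannot actually change sign on $[h^*, H]$, saving the extra sign change needed to get down to $\leq H - 2$ changes total. The main obstacle is verifying the extremal case cleanly: the ``convex geometry'' ingredient mentioned in the introduction is needed precisely here to handle the boundary configurations of $A, B$ that saturate the Granville--Walker bound, forcing $\diam \geq H + 1$ for the alternating pattern.
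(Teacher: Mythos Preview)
Your upper bound is correct and matches the paper exactly: apply Theorem~\ref{thm:construction-integers} with $m_h=\epsilon_h\in\{-1,+1\}$, so that $\sum_h|m_h|=H$ and $A,B\subseteq[0,60H^3]$.

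For the lower bound you have identified the right tool but are stating it too loosely, and this is what manufactures the apparent gap. The Granville--Walker result as recorded in the paper (Theorem~\ref{thm:effective-khovanskii}) says that for $A\subseteq[0,N]$ the quantity $|hA|$ is affine in $h$ for all $h\geq N-2$, not merely for $h\gtrsim N$ as your ``roughly $h_A^*\leq\diam(A)$'' suggests. That ``$-2$'' is exactly what you are missing. If $A,B\subseteq[0,H]$, then $|hA|-|hB|$ is already affine at the three consecutive arguments $h=H-2,\,H-1,\,H$, and an affine function cannot take a sign pattern such as $+,-,+$ on three consecutive integers; so any $(\epsilon_1,\ldots,\epsilon_H)$ with $(\epsilon_{H-2},\epsilon_{H-1},\epsilon_H)=(+,-,+)$ is unattainable, giving $\nu(H)\geq H+1$ in two lines. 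This is the paper's argument. Your alternating pattern would also succeed once the sharp threshold is inserted: affineness on $\{H-2,H-1,H\}$ permits at most one sign change among those two gaps, and the remaining $H-3$ consecutive gaps contribute at most $H-3$ more, for a total of at most $H-2<H-1$. The ``convex geometry'' you reach for is a misreading of the introduction --- that ingredient (Lemma~\ref{lem:lattice}) is used only for the lower bound on $\kappa(H)$, not for $\nu(H)$ --- and your sketch of how it would constrain the slopes $s_A,s_B$ is not an actual argument as written. With the correct Granville--Walker threshold no further ingredient is needed.
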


Our proof of the lower bound in Theorem~\ref{thm:nu-bounds} actually shows a slightly stronger statement: If $H \geq 3$ and $A,B \subseteq \mathbb{Z}$ are finite sets satisfying $|(H-2)A|<|(H-2)B|$, $|(H-1)A|>|(H-1)B|$, and $|HA|>|HB|$, then at least one of $A,B$ has diameter at least $H+1$.

For simplicity of notation, we omit floor and ceiling signs where there is no risk of confusion.

\section{Main constructions}

In this section we prove Theorems~\ref{thm:construction-integers} and~\ref{thm:construction-positive-char}.  In general the quantities $|hA|,|hB|$ grow somewhat erratically for small values of $h$.  One key idea is that when the sets $A,B$ have a special form, we can express the differences $|hA|-|hB|$ as linear combinations of some auxiliary parameters and then solve the resulting linear system.  The argument is particularly clean in the integer setting and a bit more involved in the positive-characteristic setting.

\subsection{The integers}

We begin with the integer setting.

\begin{proof}[Proof of Theorem~\ref{thm:construction-integers}]
We set $$N:=60H^2 \sum_{h = 1}^H |m_h|$$
(the choice of the constant $60$ is not important)
and define the interval $$I:=\left[\left(1-\frac{1}{2H}\right)N+1,N\right].$$  Our sets will have the form
$$A:=\{0,1\} \cup (I \setminus A'), \quad B:=\{0,1\} \cup (I \setminus B'),$$
where $A',B'$ are carefully-chosen subsets of the middle third of $I$.  For all such choices of $A',B'$ we have
$$j(I \setminus A')=j(I \setminus B')=jI$$
for all $j \geq 2$, which gives the identity
$$hA=\bigcup_{j=0}^h ((h-j)\{0,1\}+j(I \setminus A'))=[0,h] \cup ([0,h-1]+(I \setminus A')) \cup \bigcup_{j=2}^h ([0,h-j]+jI),$$
and likewise for $hB$.  For $1 \leq h \leq H$, the shortness of the interval $I$ guarantees that the first union over $j$ is in fact a disjoint union, so we can write
$$|hA|-|hB|=|[0,h-1]+(I \setminus A')|-|[0,h-1]+(I \setminus B')|.$$
Notice that we have expressed $|hA|-|hB|$ in terms of $2$-fold sumsets, rather than higher-order sumsets.

We take $A'$ (respectively, $B'$) to consist of $\alpha_r$ (respectively, $\beta_r$) intervals of length $r$, for $1 \leq r \leq H$, with the property that the left endpoints of these intervals are all spaced out at least $2H$ apart from one another.  Set $\gamma_r:=\beta_r-\alpha_r$.  Then for each $1 \leq h \leq H$, we have
$$|[0,h-1]+(I \setminus A')|=|I|+h-1-\sum_{r= h}^{H} \alpha_r (r-h+1),$$
and likewise for $|[0,h-1]+(I \setminus B')|$, which gives
\begin{equation}\label{eq:linear-expression}
|hA|-|hB|=\sum_{r=h}^{H} (\beta_r-\alpha_r) (r-h+1)=\sum_{r=h}^{H} \gamma_r (r-h+1).
\end{equation}
We would like to choose the $\gamma_r$'s to make this quantity equal $m_h$ for each $1 \leq h \leq H$.  This linear system in the $\gamma_r$'s is upper-triangular with all $1$'s on the main diagonal, so it has a solution in the integers.  Explicitly, we find that the solution is
\begin{equation*}\label{eq:formula-for-m's}
\gamma_H=m_H, \quad \gamma_{H-1}=m_{H-1}-2m_H, \quad \text{and} \quad \text{$\gamma_r=m_{r}-2m_{r+1}+m_{r+2}$
for $1 \leq r \leq H-2$}.
\end{equation*}
For each $1 \leq r \leq H$, take $\{\alpha_r,\beta_r\}=\{0,|\gamma_r|\}$ such that $\beta_r-\alpha_r=\gamma_r$.  It follows that the total number of intervals in $A',B'$ is
\begin{equation}\label{eq:sum-of-gamma's-bound}
\sum_r (\alpha_r+\beta_r)=\sum_r |\gamma_r| \leq 4\sum_h |m_h|.
\end{equation}
Recalling our choice of $N$, the middle third of $I$ has length $\frac{|I|}{3}=\frac{N}{6H}=10H\sum_h |m_h|$, so we can space out the desired intervals comprising $A',B'$.
\end{proof}

\subsection{Positive characteristic}

In the previous subsection, the shortness of the interval $I$ let us express $hA, hB$ as disjoint unions of pieces that were easier to understand individually.  Such disjointness is not possible in positive characteristic, and we will have to work harder to build a substitute structure that ``spreads out'' as we take higher-order sumsets.

Let us set up some notation before proceeding to the proof.  For $x \in \mathbb{Z}/p\mathbb{Z}$, let $|x|$ denote the smallest nonnegative integer that is congruent to $x$ or $-x$ modulo $p$.  The $\ell^1$-distance on $(\mathbb{Z}/p\mathbb{Z})^M$ is defined by $$\dist((x_1, \ldots, x_M),(y_1, \ldots, y_M)):=|x_1-y_1|+\cdots+|x_M-y_M|,$$
and we write $$\mathcal{B}_R(x):=\{y \in (\mathbb{Z}/p\mathbb{Z})^M: \dist(x,y) \leq R\}$$ for the $\ell^1$-ball of radius $R$ around a point $x \in (\mathbb{Z}/p\mathbb{Z})^M$.  Notice that
$$|\mathcal{B}_R(x)| \leq (2M+1)^R \leq (3M)^R$$
for all $M \geq 1$.  We write $\bzero:=(0, \ldots, 0)\in (\mathbb{Z}/p\mathbb{Z})^M$.

\begin{proof}[Proof of Theorem~\ref{thm:construction-positive-char}]
We set
$$M:=N-H=\left \lceil 10\log_p\left(1+H^{H^3}\sum_h |m_h| \right) \right\rceil$$
and write
$$(\mathbb{Z}/p\mathbb{Z})^{N}=V \times W,$$
where $V:=(\mathbb{Z}/p\mathbb{Z})^H$ and $W:=(\mathbb{Z}/p\mathbb{Z})^M$; one should think of $(\mathbb{Z}/p\mathbb{Z})^{N}$ as consisting of copies of $W$ indexed by the elements of $V$.  Fix a basis $e_1, \ldots, e_H$ of $V$.  We will construct the desired subsets of $(\mathbb{Z}/p\mathbb{Z})^{N}$ as
$$A:=\left(\bigcup_{i=1}^{H-1} (\{e_i\} \times \mathcal{B}_i({\bf 0})) \right) \cup (\{e_H\} \times W \setminus A'), \quad B:=\left(\bigcup_{i=1}^{H-1} (\{e_i\} \times \mathcal{B}_i({\bf 0})) \right) \cup (\{e_H\} \times W \setminus B'),$$
where $A',B'$ are carefully chosen small subsets of $W$.  In particular, we will have $|A'|,|B'|<|W|/2$, so that the Pigeonhole Principle gives
$$j(W \setminus A')=j(W \setminus B')=W$$
for all $j \geq 2$.  As in the previous subsection, we can expand
$$hA=\bigcup_{i_1 \leq \cdots \leq i_h} (\{e_{i_1}+\cdots+e_{i_h}\} \times (X_{i_1}+\cdots+X_{i_h})),$$
where $X_i:=\mathcal{B}_i(\bzero)$ if $i<H$ and $X_H:=W \setminus A'$.

This union is very much not disjoint (since $pe_i=\bzero$), so we must pay attention to how often the various indices $i_j$ are repeated.  If $H$ appears as an index multiple times, then the $W$-component is all of $W$ since $2X_H=W$.  Likewise, whenever there are at least $p$ equal indices $i_j$, then replacing $p$ such indices by $H$ gives another set of indices $i'_j$ such that $e_{i_1}+\cdots+e_{i_h}=e_{i'_1}+\cdots+e_{i'_h}$ and at least $p$ of the $i'_j$'s are equal to $H$; then, by the previous observation, the contribution of $(i_1, \ldots, i_h)$ to $hA$ is contained in the contribution of $(i'_1, \ldots, i'_h)$, and the former can be safely ignored.  Accounting for these reductions and conditioning on how many indices are equal to $H$, we can write $A$ as the disjoint union of the following three sets:\footnote{This decomposition plays the same role as the distinction $j=0$, $j=1$, $j \geq 2$ played in the proof in the previous subsection.}
\begin{enumerate}
    \item (\emph{no indices equal to $H$}) the disjoint union of the sets $$\{e_{i_1}+\cdots+e_{i_{h}}\} \times B_{i_1+\cdots+i_h}({\bf 0}),$$ where $i_1 \leq \cdots \leq i_h<H$ and no index is repeated $p$ or more times;
    \item (\emph{one index equal to $H$}) the disjoint union of the sets $$\{e_{i_1}+\cdots+e_{i_{h-1}}+e_H\} \times (B_{i_1+\cdots+i_{h-1}}({\bf 0})+(W \setminus A')),$$
    where $i_1 \leq \cdots \leq i_{h-1}<H$ and no index is repeated $p$ or more times;
    \item (\emph{at least two indices equal to $H$}) the union (not necessarily disjoint) of the sets
    $$\{e_{i_1}+\cdots+e_{i_{h-2}}+2e_H\} \times W,$$
    where $i_1 \leq \cdots \leq i_{h-2}\leq H$.
\end{enumerate}
The disjointness of these contributions is a consequence of the distinctness of their $V$-coordinates.  When we express $hB$ analogously, the first and third contributions are the same as the first and third contributions for $hA$.  So we find that
\begin{equation}\label{eq:diff-in-pos-char}
|hA|-|hB|=\sum^*_{i_1, \ldots, i_{h-1}} |\mathcal B_{i_1+\cdots+i_{h-1}}({\bf 0})+(W \setminus A')|-|\mathcal B_{i_1+\cdots+i_{h-1}}({\bf 0})+(W \setminus B')|,
\end{equation}
where $\sum^*$ runs over tuples of indices $i_1 \leq \cdots \leq i_{h-1}<H$ such that no index is repeated $p$ or more times.

We are now ready to specify the sets $A',B' \subseteq W$.  Set $s_r:=\lceil r/(p-1) \rceil$, so that $s_1, \ldots, s_{H-1}$ is a prefix of the infinite sequence
$$\underbrace{1,\ldots, 1}_{p-1}, \underbrace{2,\ldots, 2}_{p-1}, \ldots,$$
and define the partial sums $t_r:=\sum_{i=1}^{r-1} s_i$, which are strictly increasing with $r$.  We take $A'$ (respectively, $B'$)  to consist of $\alpha_r$ (respectively, $\beta_r$) copies of $\mathcal{B}_{t_r}$, for $1 \leq r \leq H$, with the property that the center-points of these balls are all spaced out at least $3t_H$ apart from one another (with respect to the $\ell^1$-distance).  Set $\gamma_r:=\beta_r-\alpha_r$.  Notice that the complement of $\mathcal{B}_R(\bzero)+(W \setminus A')$ consists of $\alpha_r$ copies of $\mathcal{B}_{t_r-R}$ for each $r$ with $t_r \geq R$, and the same goes for $\mathcal{B}_R(\bzero)+(W \setminus B')$.

Consider the summands in \eqref{eq:diff-in-pos-char}.  The term for $(i_1, \ldots, i_{h-1})=(s_1, \ldots, s_{h-1})$ contributes exactly
$$|\mathcal B_{t_h}({\bf 0})+(W \setminus A')|-|\mathcal B_{t_h}({\bf 0})+(W \setminus B')|=\sum_{r=h}^H \gamma_r |\mathcal{B}_{t_r-t_h}|;$$
recall that $|\mathcal{B}_{0}|=0$.  For each other sequence of indices $(1_1, \ldots, i_{h-1})$, we have $i_1+\cdots+i_{h-1}>t_h$, and the contribution to \eqref{eq:diff-in-pos-char} is
$$\sum_{r: t_r \geq i_1+\cdots+i_{h-1}} \gamma_r |\mathcal{B}_{t_r-(i_1+\cdots+i_{h-1})}|;$$
notice that $\gamma_1, \ldots, \gamma_h$ do not appear and that the coefficients of the $\gamma_r$'s that do appear are all at most $|\mathcal{B}_{t_H}| \leq (3M)^{t_H}$.  There are (crudely) at most $H^{H-1}$ tuples of indices in the latter category.  Putting everything together, we can express $|hA|-|hB|$ as $\gamma_h$ plus an integer linear combination of $\gamma_{h+1}, \ldots, \gamma_H$, where the coefficients of $\gamma_{h+1}, \ldots, \gamma_H$ are all of size at most $$Q:=H^{H-1}(3M)^{t_H}.$$
If we set $|hA|-|hB|=m_h$ for each $1 \leq h \leq H$, then, as before, we obtain an upper-triangular integer system of linear equations in the $\gamma_r$'s, with all $1$'s on the main diagonal and all off-diagonal entries having size at most $Q$.  Thus this linear system has a solution in the integers, and by starting at $\gamma_H$ and working our way down to $\gamma_1$ we can see inductively that
$$|\gamma_r| \leq (QH)^{H-r} \sum_{h} |m_h|$$
for each $1 \leq r \leq H$. 
As before, for each $1 \leq r \leq H$, take $\{\alpha_r,\beta_r\}=\{0,|\gamma_r|\}$ such that $\beta_r-\alpha_r=\gamma_r$.  It follows that the total number of balls in $A',B'$ is
$$\sum_r (\alpha_r+\beta_r)=\sum_r |\gamma_r| \leq 2(QH)^H \sum_h |m_h| =2H^{H^2} (3M)^{Ht_H} \sum_h |m_h|.$$

It remains to check that $(\mathbb{Z}/p\mathbb{Z})^M$ contains a set of at least this many points, all spaced out at least $3t_H$ apart, to serve as the center-points of the balls of $A',B'$.  Take any maximal subset $S \subseteq (\mathbb{Z}/p\mathbb{Z})^M$ of points that are all spaced out at least $3t_H$ apart.  Then the balls of radius $3t_H$ centered at the points of $S$ together cover all of $(\mathbb{Z}/p\mathbb{Z})^M$, and in particular
$$|S| \geq p^M \cdot |\mathcal{B}_{3t_H}|^{-1} \geq p^M (3M)^{-3t_H}.$$
Using $t_H \leq H^2$, we conclude by noting that the choice of $M$ guarantees
$$p^M \geq 2H^{H^2} (3M)^{(H+3)t_H} \sum_h |m_h|,$$
with room to spare.
\end{proof}

\section{The necessity of using large sets}
In this section we prove Theorems~\ref{thm:kappa-bounds} and~\ref{thm:nu-bounds} about bounds on $\kappa(H),\nu(H)$.  For the lower bound on $\kappa(H)$, we require the following lemma based on convex geometry.

\begin{lemma}\label{lem:lattice}
Let $G$ be any abelian group, and let $k,H \in \mathbb{N}$.  Then there are at most $O(H)^{k^2}$ different sequences
$$|A|, |2A|, \ldots, |HA|$$
as $A$ ranges over the $k$-element subsets of $G$.
\end{lemma}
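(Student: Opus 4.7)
The plan is to show that the sequence $|A|,\ldots,|HA|$ is determined by the short integer relations among the elements of $A$, and then to bound the number of possibilities via lattice basis reduction. Write $A=\{a_1,\ldots,a_k\}$ and let $\phi\colon\mathbb{Z}^k\to G$ send $e_i\mapsto a_i$. The lattice
$$\Lambda_0:=\ker(\phi)\cap H_0, \qquad H_0:=\bigl\{d\in\mathbb{Z}^k:\sum_i d_i=0\bigr\}\cong\mathbb{Z}^{k-1},$$
captures the relevant relations: for $v,v'\in\Delta_h:=\{v\in\mathbb{Z}_{\geq 0}^k:\sum_i v_i=h\}$, one has $\phi(v)=\phi(v')$ if and only if $v-v'\in\Lambda_0$, so $|hA|$ is the number of $\Lambda_0$-cosets meeting $\Delta_h$.

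Any difference $v-v'$ with $v,v'\in\Delta_h$ has $\ell^1$-norm at most $2h\leq 2H$, so the equivalence on $\Delta_h$ is determined by the finite set $\Lambda_0^{\ast}:=\Lambda_0\cap B$, where $B:=\{d\in H_0:\|d\|_1\leq 2H\}$. Let $\Lambda_1$ be the $\mathbb{Z}$-sublattice of $H_0$ generated by $\Lambda_0^{\ast}$. The inclusions $\Lambda_0^{\ast}\subseteq\Lambda_1\cap B$ and $\Lambda_1\cap B\subseteq\Lambda_0\cap B=\Lambda_0^{\ast}$ (using $\Lambda_1\subseteq\Lambda_0$) force $\Lambda_0^{\ast}=\Lambda_1\cap B$. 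Thus $(|hA|)_{h=1}^H$ is determined by the sublattice $\Lambda_1\subseteq H_0$, and it suffices to count the sublattices of $H_0\cong\mathbb{Z}^{k-1}$ that are generated by vectors of $\ell^1$-norm at most $2H$.

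This is now a question in the geometry of numbers. For such a $\Lambda_1$ of rank $r\leq k-1$, all its successive minima with respect to $\|\cdot\|_1$ are at most $2H$ (they are witnessed by $r$ independent generators lying in $B$); hence by Minkowski's second theorem together with the standard basis-selection refinement (Mahler), $\Lambda_1$ admits a $\mathbb{Z}$-basis $b_1,\ldots,b_r$ with $\|b_i\|_1\ll kH$. Each such $b_i$ is one of $O(kH)^{k-1}$ integer points of $H_0$, and selecting at most $k-1$ of them gives at most $O(H)^{(k-1)^2}\leq O(H)^{k^2}$ possible lattices (with $k$-dependent factors absorbed into the implicit constant), hence at most $O(H)^{k^2}$ sequences.

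The main obstacle is the basis-selection step just invoked: a lattice generated by short vectors need not have a basis of comparably short vectors without a rank-dependent loss factor, so one really does need Minkowski's second theorem (or a Mahler-type refinement) rather than just taking a maximal independent subset of short generators. One minor additional care point is that $\Lambda_1$ may fail to be full rank in $H_0$, which is handled by applying the basis-selection theorem inside the $\mathbb{Q}$-span of $\Lambda_1$; another is to verify that the polynomial-in-$k$ slack in the basis length does not spoil the exponent $k^2$ in the final count, which it does not since $(k-1)^2\leq k^2$.
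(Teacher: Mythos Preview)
Your proof is correct and follows essentially the same route as the paper's: reduce the sequence $(|hA|)_{h\le H}$ to the set of short linear relations among the $a_i$, pass to the sublattice those relations generate, and then bound the number of such sublattices by counting short bases. The paper carries this out in $\mathbb{Z}^k$ with the box $[-H,H]^k$ (rather than in the zero-sum hyperplane with the $\ell^1$-ball) and simply asserts that the resulting lattice has a basis inside that box; your explicit appeal to the Mahler/Minkowski basis-selection theorem is exactly what underwrites that step, at the harmless cost of a $k$-dependent factor absorbed into the $O(H)$.
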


\begin{proof}
We proceed by an encoding argument.  Let $A=\{a_1, \ldots, a_k\} \subseteq G$.  Then the sequence $|A|, |2A|, \ldots, |HA|$ is determined by the set
$$X(A):=\{(x_1, \ldots, x_k) \in [-H,H]^k: x_1a_1+\cdots+x_ka_k=0\} \subseteq \mathbb{Z}^k.$$
Now define the lattice
$$\Lambda(A):=\Span_{\mathbb{Z}}(X(A)).$$
Since $X(A)=\Lambda(A) \cap [-H,H]^k$, the sequence $|A|, |2A|, \ldots, |HA|$ is determined by $\Lambda(A)$.  By construction, $\Lambda(A)$ has a basis contained in $[-H,H]^k$.  The lemma now follows from the observation that there are at most
$$\sum_{\ell=0}^k (2H+1)^{k \ell} \leq O(H)^{k^2}$$
linearly independent subsets of $[-H,H]^k$ and hence at most $O(H)^{k^2}$ possible bases for $\Lambda(A)$.
\end{proof}

We mention the following alternative proof of Lemma~\ref{lem:lattice}, suggested by Noga Alon. It is well-known among experts (see, e.g., the discussion in~\cite{nathanson3}) that every set $A$ of $k$ integers is $2H$-Freiman-isomorphic to a set of $k$ integers contained in $[0,O(H)^k]$; since the number of $k$-element subsets of $[0,O(H)^k]$ is $O(H)^{k^2}$, there are at most $O(H)^{k^2}$ different sequences $|A|, |2A|, \ldots, |HA|$.

\begin{proof}[Proof of Theorem~\ref{thm:kappa-bounds}]
We start with the lower bound on $\kappa(H)$.  Lemma~\ref{lem:lattice} tells us that there are at most $O(H)^{2k^2}$ different sequences
$$|A|-|B|, |2A|-|2B|, \ldots, |HA|-|HB|$$
as $A,B$ range over subsets of $\mathbb{Z}$ each of size at most $k$.  In order for such sequences to cover all of the $2^H$ sign patterns $\{-,+\}^H$, we must have $2k^2$ larger than a suitable constant times $H/\log H$.  Thus $\kappa(H) \gg \sqrt{H/\log H}$, as desired.

For the upper bound on $\kappa(H)$, we will modify the construction from Theorem~\ref{thm:construction-integers} with $(m_1, \ldots, m_H)$ ranging over $\{-1,1\}^H$.  The two modifications are that we will place the intervals of $A',B'$ right next to each other (instead of allowing extra space between them) and that we will ``thin out'' the leftmost and rightmost thirds of the interval $I$.  The details are as follows.

Fix a choice of $(m_1, \ldots, m_H) \in \{-1,1\}^H$, and take $\alpha_r, \beta_r, \gamma_r$ (for $1 \leq r \leq H$) as in the proof of Theorem~\ref{thm:construction-integers}.  Let $s_1, \ldots, s_{\sum_r \alpha_r}$ be the sequence consisting of $\alpha_1$ copies of $1$, followed by $\alpha_2$ copies of $2$, and so on, up to $\alpha_H$ copies of $H$; define $t_1, \ldots, t_{\sum_r \beta_r}$ analogously.  Now consider the set
$$A'':=\left\{ \sum_{i=0}^j (s_i+1): 0 \leq j \leq \sum_r \alpha_r \right\},$$
and obtain $B''$ from the $t_i$'s analogously.  Notice that $A''$ (respectively, $B''$) has exactly $\alpha_r$ (respectively, $\beta_r$) gaps of length $r$ for each $1 \leq r \leq H$ and no other gaps.  We claim that $A'',B''$ are small and have similar diameters.  First, we have
$$|A''|+|B''|=2+\sum_r (\alpha_r+\beta_r)=2+\sum_r |\gamma_r| \leq 2+4\sum_r|m_r| \leq 2+4H$$
by \eqref{eq:sum-of-gamma's-bound}.
Next, the diameters of $A'',B''$ are
$$\diam(A'')=\sum_r (r+1)\alpha_r \quad \text{and} \quad \diam(B'')=\sum_r (r+1)\beta_r.$$
The sum is
$$\diam(A'')+\diam(B'')=\sum_r(r+1)(\alpha_r+\beta_r)=\sum_r (r+1)|\gamma_r| \leq 4(H+1)\sum_h |m_h| \ll H^2$$
by another application of \eqref{eq:sum-of-gamma's-bound}.  The difference is
$$\diam(B'')-\diam(A'')=\sum_r(r+1)(\beta_r-\alpha_r)=\sum_r (r+1)\gamma_r=2\sum_r r \gamma_r -\sum_r (r-1) \gamma_r=2m_1-m_2$$
by the formula \eqref{eq:linear-expression}, and this difference has absolute value at most $3$ since $|m_1|=|m_2|=1$.  Now set $$D:=\max\{\diam(A''), \diam(B'')\},$$
and define
$$\widetilde{A}:=A'' \cup [\max(A''), D] \quad \text{and} \quad \widetilde{B}:=B'' \cup [\max(B''), D].$$
It follows from the above observations that $D \ll H^2$, that $|\widetilde{A}|, |\widetilde{B}| \ll H$, that $\widetilde{A}, \widetilde{B}$ each have minimum value $0$ and maximum value $D$, and that $\widetilde{A}$ (respectively, $\widetilde{B}$) has exactly $\alpha_r$ (respectively, $\beta_r$) gaps of length $r$ for each $1 \leq r \leq H$ and no other gaps.

The next observation is that there is a set $X_D \subseteq [0,D]$ of size $|X_D| \ll \sqrt{D} \ll H$ such that $X_D+X_D=[0,D]+[0,D]=[0,2D]$.  One example of such a set $X_D$ is
$$[0,D'] \cup \{iD': 0 \leq i \leq D/D'\} \cup [D-D',D] $$
with $D':=\left\lfloor \sqrt{D} \right \rfloor$.  We are now ready to describe the final sets in our construction.  Define
$$A:=\{0,1\} \cup (10HD+X_D) \cup ((10H+1)D+\widetilde{A}) \cup ((10H+2)D+X_D),$$
and likewise for $B$.  The argument from the proof of Theorem~\ref{thm:construction-integers} ensures that $|hA|-|hB|=m_h$ for each $1 \leq h \leq H$, and we see from above that $|A|,|B| \ll H$.  Since we have such a construction for all choices of $(m_1, \ldots, m_H) \in \{-1,1\}^H$, we conclude that $\kappa(H) \ll H$.
\end{proof}

We now turn to $\nu(H)$.  We require the following effective Khovanskii theorem of Granville and Walker~\cite{GW}.

\begin{theorem}[\cite{GW}]\label{thm:effective-khovanskii}
Let $N \geq 3$ be a natural number.  Then for every subset $A \subseteq [0,N]$, there are integers $a=a(A),b=b(A)$ such that $|hA|=ah+b$ for all $h \geq N-2$.
\end{theorem}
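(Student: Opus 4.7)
The plan is to decompose $hA \subseteq [0, hN]$, for large $h$, into three parts: a ``stabilized bottom'' in $[0, M]$ whose intersection with $hA$ is independent of $h$, a ``saturated middle'' $[M+1, hN - M - 1]$ entirely contained in $hA$, and a ``stabilized top'' in $[hN - M, hN]$ that mirrors the bottom via the symmetry $h(N - A) = hN - hA$. Once a threshold $M$ is produced that already works for all $h \geq N - 2$, one reads off $|hA| = Nh + b$ directly, with slope $a = N$ coming from the growing middle and $b$ determined by the two fixed end pieces.

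First I would reduce to the case $\min A = 0$, $\max A = N$, and $\gcd(A) = 1$: shifting preserves $|hA|$, dividing out $\gcd(A)$ preserves $|hA|$, and shrinking $N$ only strengthens the conclusion. Write $A = \{0 = a_0 < a_1 < \cdots < a_k = N\}$ and let $S = \langle A \rangle$ denote the numerical semigroup generated by $A$, with Frobenius number $F$. A soft form of the three-part decomposition is then quick: $hA \cap [0, F]$ stabilizes to $S \cap [0, F]$ because every element of $S \cap [0, F]$ admits a representation as a sum of at most $F/a_1$ nonzero elements of $A$ and can be padded with copies of $0$ once $h$ is large enough; the middle is saturated because every integer in $(F, hN - F)$ lies in $S$ and can be padded analogously; the top is handled by applying the bottom analysis to $N - A$.

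The main obstacle is sharpening the threshold to $h \geq N - 2$ rather than one proportional to the Frobenius number $F$ (which can be as large as $\Theta(N^2)$); the naive argument above only yields $h \gtrsim F/a_1$. To close this gap I would follow Granville--Walker in bypassing $F$ entirely and directly parametrizing representations $s = \sum_{i=0}^k n_i a_i$ with $\sum_i n_i = h$, tracking when $n_0$ (the number of $0$-summands) or $n_k$ (the number of $N$-summands) can be made nonzero. The key claim is that once $h \geq N - 2$, every $s \in hA$ admits either a representation with $n_0 \geq 1$ (so $s \in (h-1)A$) or a representation with $n_k \geq 1$ (so $s - N \in (h-1)A$). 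Writing $hA = (h-1)A \cup (N + (h-1)A)$ past this threshold and carefully counting the overlap then yields $|hA| - |(h-1)A| = N$, and hence linearity. Verifying that the threshold is exactly $N - 2$, rather than $N - 1$ or larger, is the delicate quantitative step; its sharpness is attested by examples such as $A = \{0, 1, N\}$, where $|(N - 3)A|$ genuinely deviates from the linear formula while $|(N - 2)A|$ already agrees.
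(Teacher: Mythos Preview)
The paper does not give its own proof of this statement: Theorem~\ref{thm:effective-khovanskii} is quoted from Granville--Walker~\cite{GW} and used as a black box in the proof of Theorem~\ref{thm:nu-bounds}. There is therefore no proof in the paper against which to compare your proposal.

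For what it is worth, your outline tracks the Granville--Walker strategy faithfully: the reduction to $0,N\in A$ with $\gcd(A)=1$, the bottom/middle/top decomposition, the recognition that the naive Frobenius-number argument only yields a threshold of order $F$ rather than $N-2$, and the sharpness example $A=\{0,1,N\}$ are all correct. But what you have written is a plan, not a proof. The decisive claim---that for $h\ge N-2$ every $s\in hA$ has a representation using at least one copy of $0$ or at least one copy of $N$---is stated but not argued, and you yourself flag it as ``the delicate quantitative step''. Moreover, the passage from $hA=(h-1)A\cup(N+(h-1)A)$ to $|hA|-|(h-1)A|=N$ is not automatic: inclusion--exclusion gives $|hA|-|(h-1)A|=|(h-1)A\setminus(N+(h-1)A)|$, and showing this equals $N$ requires essentially the full structural description of $(h-1)A$, not merely the union identity. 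If you intend a self-contained proof, those two steps are where the actual content lives.
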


\begin{proof}[Proof of Theorem~\ref{thm:nu-bounds}]
Theorem~\ref{thm:construction-integers} with $(m_1, \ldots, m_H)$ ranging over $\{-1,1\}^H$ immediately implies the upper bound $\nu(H) \ll H^3$.  For the lower bound on $\nu(H)$, Theorem~\ref{thm:effective-khovanskii} tells us that if $A,B$ are subsets of $[0,N]$, then $|hA|-|hB|$ is an affine-linear function of $h$ for all $h \geq N-2$.  In particular, if $N \leq H$, then it is not possible to have $|(H-2)A|<|(H-2)B|$, $|(H-1)A|>|(H-1)B|$, and $|HA|>|HB|$; it follows that $\nu(H) \geq H+1$.
\end{proof}

\section{Further questions}

We conclude with a few open questions for future research.  The most pressing problem, of course, is determining the asymptotic growth rates of the quantities $\kappa(H),\nu(H)$.  Our upper bound on $\nu(H)$ seems particularly ripe for improvement.

In our lower-bound proof for $\nu(H)$, we exhibited specific sign patterns $\epsilon_1,\ldots, \epsilon_H$ that cannot be achieved  by sets $A,B$ of small diameter.  In the setting of $\kappa(H)$, it would likewise be interesting to determine specific sign patterns that cannot be achieved by sets $A,B$ of small cardinality.

One could also study analogues of $\kappa(H),\nu(H)$ in the positive-characteristic setting; for the analogue of $\nu(H)$, one would want to use the dimension of a set as a ``substitute'' for the diameter.  Our construction for Theorem~\ref{thm:construction-positive-char} provides some upper bounds, and one could obtain weak lower bounds by counting arguments.

\section*{Acknowledgements}
This material is based upon work supported by the National Science
Foundation under Grant No. DMS-1928930, while the first and third authors were in
residence at the Simons Laufer Mathematical Sciences Institute in
Berkeley, California, during the Spring 2025 semester.  The first author was supported by NSF Award DMS-2154129. The second author was supported in part by the NSF Graduate Research Fellowship Program under
grant DGE–203965.

We thank Noga Alon for helpful conversations.


\begin{thebibliography}{99}

\bibitem{GW} A Granville and A. Walker, A tight structure theorem for sumsets.  \emph{Proc. Amer. Math. Soc.}, {\bf 49.10} (2021), 4073--4082.

\bibitem{khov1} A. G. Khovanskii. Newton polyhedron, Hilbert polynomial, and sums of
finite sets. \emph{Functional Anal. Appl.}, {\bf 26.4} (1992), 276--281.

\bibitem{khov2} A. G. Khovanskii. Sums of finite sets, orbits of commutative semigroups,
and Hilbert functions. \emph{Functional Anal. Appl.}, {\bf 29.2} (1995), 102--112.

\bibitem{noah} N. Kravitz, Relative sizes of iterated sumsets.  \emph{J. Number Theory}, {\bf 272} (2025), 113--128.

\bibitem{nathanson} M. Nathanson, Inverse problems for sumset sizes of finite sets of integers.  Preprint ArXiv:2412.16154v1 (2024).

\bibitem{nathanson2} M. Nathanson, Sums of finite sets of integers.  \emph{Amer. Math. Monthly}, {\bf 79} (1972), 1010--1012.

\bibitem{nathanson3} M. Nathanson, Compression and complexity for sumset sizes in additive number theory.  Preprint ArXiv:2505.20998v1 (2025).

\bibitem{NR} M. Nathanson and I. Ruzsa, Polynomial growth of sumsets in abelian semigroups. \emph{J. Th\'eor.
Nombres Bordeaux}, {\bf 14.2} (2002), 553--560.

\bibitem{anne} P. P\'eringuey and A. de Roton, A note on iterated sumset races.  Preprint ArXiv:2505.11233v1 (2025).

\end{thebibliography}
\end{document}